\newcommand\arXiv[1]{\href{http://arxiv.org/abs/#1}{\nolinkurl{arXiv:#1}}}
\newcommand\MRnumber[1]{\href{http://www.ams.org/mathscinet-getitem?mr=#1}{\nolinkurl{MR#1}}}
\newcommand\DOI[1]{\href{http://dx.doi.org/#1}{\nolinkurl{DOI:#1}}}
\newcommand\MAILTO[1]{\href{mailto:#1}{\nolinkurl{#1}}}
\newcounter{mainthm}
\newtheorem{maintheorem}[mainthm]{Theorem}
\newtheorem{dummy}{Dummy}[section]
\newtheorem{lemma}[dummy]{Lemma}
\newtheorem{proposition}[dummy]{Proposition}
\newtheorem{corollary}[dummy]{Corollary}
\newtheorem{definition}[dummy]{Definition}
\theoremstyle{definition}
\newtheorem*{rem}{Remark}
\renewcommand\mathbb\mathds
\newcommand\bC{\mathbb C}
\newcommand\bZ{\mathbb Z}
\newcommand\cC{\mathcal C}
\newcommand\rH{\mathrm H}
\newcommand\rU{\mathrm U}
\newcommand\SH{\mathrm {SH}}
\newcommand\gp{\mathrm {gp}}
\DeclareMathOperator\homology{H}
\renewcommand\H{\homology}
\newcommand\condense{\mathrel{\,\hspace{.75ex}\joinrel\rhook\joinrel\hspace{-.75ex}\joinrel\rightarrow}}
\newcommand\longto\longrightarrow
\newcommand\mono\hookrightarrow
\newcommand\epi\twoheadrightarrow
\newcommand\<\langle
\renewcommand\>\rangle
\newcommand\sminus\smallsetminus
\DeclareMathOperator\End{End}
\newcommand\define[1]{\emph{#1}}
\newcommand\cat[1]{\textsc{#1}}
\title{Fusion 2-categories with no line operators are grouplike}
\author[Johnson-Freyd and Yu]{Theo Johnson-Freyd$^{1,2}$ and Matthew Yu$^{*,1}$}
\thanks{ Research at the Perimeter Institute is supported by the Government of Canada through Industry Canada and by the Province of Ontario through the Ministry of Economic Development and Innovation. 
 The Perimeter Institute is in the Haldimand Tract, land promised to the Six Nations. Dalhousie University is in Mi`kma`ki, the ancestral and unceded territory of the Mi`kmaq. We are all Treaty people.
 \\[6pt]
$^1$ \textsc{Perimeter Institute for Theoretical Physics, Waterloo, Ontario}.\\ 
$^2$ \textsc{Department of Mathematics, Dalhousie University, Halifax, Nova Scotia}.
\\[6pt]
$^*$ Corresponding author. \MAILTO{myu@perimeterinstitute.ca}
}
\begin{document}
\begin{abstract}
We show that if $\cC$ is a fusion $2$-category in which the endomorphism category of the unit object is  $\cat{Vec}$ or $\cat{SVec}$, then the indecomposable objects of $\cC$ form a finite group.
\end{abstract}
\maketitle

\section{Introduction}

Just as multifusion $1$-categories describe the fusion of quasiparticle excitations ---
$1$-spacetime-dimensional objects, aka line operators --- 
in topological phases of matter, multifusion $2$-categories (first introduced in \cite{douglas2018fusion}) describe the fusion of $2$-spacetime-dimensional ``quasistring'' excitations, aka surface operators. Except in very low dimensions, a typical topological phase can have quasistring excitations which are not determined by the quasiparticle excitations, and multifusion $2$-categories are vital for the construction and classification of topological phases in medium dimension \cite{Lan_2017,PhysRevX.8.021074,johnsonfreyd2020classification}.

Recall that a multifusion $1$-category $\cC$ is \define{fusion} if the endomorphism algebra $\Omega\cC = \End_\cC(1_\cC)$ is trivial, i.e.\ isomorphic to $\bC$, where $1_\cC \in \cC$ denotes the monoidal unit \cite{etingof2015tensor}. There are two reasonable categorifications of this notion when $\cC$ is a multifusion $2$-category. The stronger generalization, which we will call \define{strongly fusion}, is to ask that the endomorphism $1$-category $\Omega\cC = \End_\cC(1_\cC)$ be trivial, i.e.\ equivalent to $\cat{Vec}_\bC$. The weaker notion, which we will call merely \define{fusion}, is to ask only that $\Omega^2\cC = \End_{\Omega\cC}(1_{1_\cC})$ be trivial, where $1_{1_\cC} \in \Omega\cC$ is the identity object. A \define{fusion 2-category} is a finite semisimple monoidal 2-category that has left and right duals for objects and a simple monoidal unit.  Physically, if $\cC$ describes the surface operators in a topological phase, then $\Omega\cC$ describes the line operators and $\Omega^2\cC$ describes the vertex ($0$-spacetime-dimensional) operators.

The classification of fusion $1$-categories is extremely rich \cite{etingof2004classification,Jordan_2009,natale2018classification}. The simplest examples are the \define{grouplike}, aka pointed, fusion $1$-categories, whose isomorphism classes of simple objects form a group $G$ under the fusion product. These are famously classified by ordinary group cohomology $\rH_{\gp}^3(G; \rU(1))$. But there are many nongrouplike examples.  The classification of (merely) fusion $2$-categories is similarly rich, since it includes the classification of braided fusion $1$-categories \cite[Construction 2.1.19]{douglas2018fusion}. The main result of this note shows a dramatic difference with the strongly fusion case:

\begin{maintheorem}\label{theorem-bosonic}
  If $\cC$ is a strongly fusion $2$-category, then the equivalence classes of indecomposable objects of $\cC$ form a finite group under the fusion product.
\end{maintheorem}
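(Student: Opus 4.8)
The plan is to reduce the theorem to the single statement that every indecomposable (simple) object $X$ of $\cC$ is invertible, i.e.\ that the coevaluation $\mathrm{coev}\colon 1_\cC\to X\otimes X^*$ witnessing the dual $X^*$ is an equivalence, equivalently that $X\otimes X^*\cong 1_\cC$. Granting this, invertible objects are closed under $\otimes$ and under duals, each invertible object is automatically indecomposable (its endomorphism category is $\simeq\Omega\cC\simeq\cat{Vec}$, which has a simple unit), and there are finitely many of them by finite semisimplicity; so their equivalence classes form a finite group, and this group exhausts the indecomposables. Thus the entire content is the invertibility of simple objects.

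First I would set up the duality bookkeeping. Since $\cC$ is rigid, for simple $X$ the dual-adjunction gives an equivalence of finite semisimple categories $\cC(1_\cC, X\otimes X^*)\simeq \cC(X,X)$ under which $\mathrm{coev}$ corresponds to $\mathrm{id}_X$. As $X$ is simple, $\cC(X,X)$ is a fusion $1$-category (its unit $\mathrm{id}_X$ is simple), hence nonzero, so $X\otimes X^*$ contains $1_\cC$ as a summand. Writing $X\otimes X^* \cong 1_\cC^{\oplus m}\oplus R$ with $R$ carrying no $1_\cC$-summand, the multiplicity $m$ is read off from $\cC(1_\cC, X\otimes X^*)\simeq \Omega\cC^{\oplus m}$; because $\Omega\cC\simeq\cat{Vec}$ has a single simple object, this forces $m=\mathrm{rank}\,\cC(X,X)$. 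Hence $X$ is invertible if and only if $\cC(X,X)\simeq\cat{Vec}$ and $R=0$. This is precisely where the strongly fusion hypothesis enters: the triviality of the ``line operators'' $\Omega\cC$ is what collapses the multiplicity count to the rank of the on-surface endomorphisms $\cC(X,X)$.

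The engine of the argument is then a dimension function. Using the duality data of the fusion $2$-category, each object $X$ carries a categorical dimension $\dim_\cC(X)$, which is an object of $\Omega\cC$; since $\Omega\cC\simeq\cat{Vec}$ it is an honest finite-dimensional vector space, and I set $\delta(X)=\dim_\bC\dim_\cC(X)\in\bZ_{\ge 0}$ (equivalently the Frobenius--Perron dimension of $X$ in the fusion ring of $\cC$). This $\delta$ is additive over $\oplus$, multiplicative over $\otimes$, satisfies $\delta(1_\cC)=1$ and $\delta(X^*)=\delta(X)$, and is strictly positive on nonzero objects by nondegeneracy of the pairing. The decisive feature --- the reason the statement fails for fusion $1$-categories but holds here --- is integrality: whereas quantum dimensions in a fusion $1$-category may be irrational (Fibonacci), a dimension valued in $\cat{Vec}$ is forced to be a non-negative integer. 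Combining with the previous step, $\delta(X)^2=\delta(X\otimes X^*)=m+\delta(R)\ge m\ge 1$, so that proving $\delta(X)=1$ would immediately give $m=1$ and $\delta(R)=0$, hence $R=0$, i.e.\ $X\otimes X^*\cong 1_\cC$ and $X$ invertible.

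The main obstacle is therefore to prove $\delta(X)=1$ for every simple $X$, equivalently that the on-surface fusion category $\cC(X,X)$ is trivial. Integrality alone does not suffice --- integral fusion $1$-categories such as $\cat{Rep}(G)$ have simple objects of dimension $>1$ --- so I expect to need a genuinely $2$-categorical input tying $\delta(X)$ to $\cC(X,X)$, of the form $\mathrm{FPdim}\,\cC(X,X)=\delta(X)^2$, together with a global Frobenius--Perron argument. Concretely, I would form the regular object $\bigoplus_{Y}\delta(Y)\,Y$ summed over simple $Y$, on which $X\otimes(-)$ acts by the scalar $\delta(X)$, and use the action of the group of invertible objects on the finite set of simples, together with the positivity and multiplicativity of $\delta$, to force the orbit of $1_\cC$ to exhaust the simples. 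Equivalently, showing that the trivially graded part of $\cC$ --- the full sub-$2$-category generated by the objects $X\otimes X^*$ --- collapses to $2\cat{Vec}$ is where the hypothesis $\Omega\cC\simeq\cat{Vec}$ must be used at full strength, and it is the step I expect to be hardest to make rigorous.
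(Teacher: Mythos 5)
Your overall strategy---reduce everything to showing $X\otimes X^*\cong 1_\cC$ for each simple $X$---is the same as the paper's, but the two steps you lean on hardest are not established, and one of them is precisely the crux of the theorem. The decisive gap is the claimed multiplicativity of $\delta$ under $\otimes$. If $\delta(X)=\dim_\bC\dim_\cC(X)$ with $\dim_\cC(X)=\eta_X^*\circ\eta_X\in\Omega\cC$, then $\delta$ is \emph{not} obviously multiplicative: geometrically $\dim_\cC(X\otimes Y)$ is a cylinder nested inside a cylinder, not two disjoint cylinders, and the paper explicitly points out that $\int_{S^1_b}$ fails to be monoidal in general. Proving the special case you need---that $\dim_\cC(X\otimes Y)\cong\bC$ when $X,Y$ are simple, equivalently that the tensor product of simples is simple---is exactly where the hypothesis $\Omega\cC\simeq\cat{Vec}$ does its work in the paper (the inner cylinder collapses onto the vacuum line via the state--operator correspondence). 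If instead you take $\delta=\mathrm{FPdim}$ so that multiplicativity is free, then integrality and the identification $\mathrm{FPdim}(X)=\dim_\bC\dim_\cC(X)$ are unproven, and that identification is essentially equivalent to the theorem. Either way, the property you list in passing is the theorem's content. Moreover, granting your own list of properties, you are already done: $\delta(X)=\dim_\bC\End_{\End_\cC(X)}(1_X)=1$ for every simple $X$ is \emph{immediate} from the duality adjunction (it just says $1_X$ is a simple object of the fusion $1$-category $\End_\cC(X)$), so your proposed ``main obstacle'' is a non-obstacle, and it is \emph{not} equivalent to $\cC(X,X)\simeq\cat{Vec}$---the dimension only sees $\End(1_X)$, not the whole category $\cC(X,X)$. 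The Frobenius--Perron/regular-object argument you sketch at the end is therefore aimed at the wrong target.

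A second, smaller gap: you conclude that $1_\cC$ is a direct summand of $X\otimes X^*$ from the nonvanishing of $\cC(1_\cC,X\otimes X^*)$. This is $1$-categorical Schur reasoning, which fails in semisimple $2$-categories: a nonzero $1$-morphism between simple objects yields a condensation, not an inclusion of a summand, so a priori the simple summand of $X\otimes X^*$ hit by $\eta_X$ could be a nontrivial object in the component of $1_\cC$. The paper closes this by observing that any simple object condensed from $1_\cC$ is the image of a simple separable algebra in $\End_\cC(1_\cC)\simeq\cat{Vec}$, of which there is only $\bC$; hence the unit's component is a singleton. You need this statement (and it is the second place the strongly fusion hypothesis enters), but your writeup supplies neither it nor a substitute.
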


We also address the ``fermionic'' case where $\Omega\cC \cong \cat{SVec}$:

\begin{maintheorem}\label{theorem-super}
  If $\cC$ is a  fusion $2$-category
  with $\Omega\cC \cong \cat{SVec}$, then the equivalence classes of indecomposable objects of $\cC$ form a finite group, which is a central double cover of the group $\pi_0\cC$ of components of $\cC$ (see Definition~\ref{defn.component}).
\end{maintheorem}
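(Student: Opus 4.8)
The plan is to prove two separate things: first, that every indecomposable object of $\cC$ is invertible, so that the indecomposables form a finite group $G$ under fusion; and second, that the fermion in $\Omega\cC \cong \cat{SVec}$ forces a distinguished central element of order two in $G$ whose quotient recovers $\pi_0\cC$. For the first part I would re-run the argument behind Theorem~\ref{theorem-bosonic}: for an indecomposable $X$ with dual $X^\vee$, analyze $X \otimes X^\vee$, which contains the unit via the coevaluation, and show by a Frobenius--Perron dimension and positivity count that no other summand can occur, i.e.\ $X \otimes X^\vee \cong 1$. The only change from the bosonic case is bookkeeping: $\End_\cC(1) = \Omega\cC$ now has dimension $2$ rather than $1$, and $\End_\cC(X)$, which is a fusion category acted on by the braided category $\Omega\cC \cong \cat{SVec}$ through whiskering, is forced to be either $\cat{SVec}$ (the fermion acts nontrivially) or $\cat{Vec}$ (the fermion acts trivially). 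Invertibility of $X$ amounts to the dimension of $\End_\cC(X)$ being as small as these constraints permit.

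The heart of the theorem is locating the order-two element. I would use the structural fact that the connected component of the monoidal unit --- the full sub-$2$-category on those indecomposables joined to $1$ by a nonzero $1$-morphism --- is equivalent to $\Sigma\Omega\cC = \Sigma\cat{SVec}$, the condensation completion of $\cat{SVec}$. Unlike $\Sigma\cat{Vec} \simeq \cat{2Vec}$, which has a single indecomposable object (this is why the bosonic group equals $\pi_0$), the completion $\Sigma\cat{SVec}$ has exactly two: the unit and a second object $\Pi$ coming from the unique nontrivial indecomposable $\cat{SVec}$-module category $\cat{Vec}$, i.e.\ from condensing the transparent fermion. I would then record the properties of $\Pi$ that make it the generator of the central $\bZ/2$: it is central, being built from the braided category $\Omega\cC$ via the canonical central functor $\Sigma\Omega\cC \to \cC$; it satisfies $\Pi \otimes \Pi \cong 1$; one has $\End_\cC(\Pi) \cong \cat{Vec}$ because the fermion acts trivially on the module $\cat{Vec}$; and crucially $\Pi \not\cong 1$, since $\End_\cC(1) \cong \cat{SVec} \not\simeq \cat{Vec}$. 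Finally $\mathrm{Hom}_\cC(1,\Pi) \neq 0$ via the quotient module functor $\cat{SVec} \to \cat{Vec}$, so that $1$ and $\Pi$ are genuinely identified in $\pi_0\cC$ (Definition~\ref{defn.component}).

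With $G$ a group and $\Pi$ a central element of order two, the rest assembles formally. Tensoring by $\Pi$ is an autoequivalence of $\cC$ preserving components and nonvanishing of Hom-categories, so $\mathrm{Hom}_\cC(X, X \otimes \Pi) \cong \mathrm{Hom}_\cC(1,\Pi) \neq 0$ shows every indecomposable $X$ shares its component with $X \otimes \Pi$, while $X \otimes \Pi \not\cong X$ because $\Pi \neq 1$ in the group $G$. Identifying the unit component with $\Sigma\cat{SVec}$ shows these are the only identifications, so the components are exactly the $\langle\Pi\rangle$-orbits, each of size two. Hence $\pi_0\cC \cong G / \langle\Pi\rangle$, and we obtain the central extension $1 \to \bZ/2 \to G \to \pi_0\cC \to 1$.

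I expect the main obstacle to be the invertibility step, and beneath it the genuinely fermionic difficulty that the transparent fermion cannot be condensed bosonically: there is no connected \'etale algebra in $\cat{SVec}$ that trivializes $f$, which is precisely why $\Pi$ has order two and the extension need not split. I would therefore not try to reduce Theorem~\ref{theorem-super} to Theorem~\ref{theorem-bosonic} by condensing the fermion to manufacture a strongly fusion $2$-category; instead I would carry the $\cat{SVec}$-bookkeeping through the dimension argument directly, and treat the existence and nontriviality of $\Pi$ --- that is, the computation of the unit component as $\Sigma\cat{SVec}$ --- as the key independent input.
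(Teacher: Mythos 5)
Your second and third paragraphs track the paper closely: the unit component has exactly two simple objects $1_\cC$ and $\Pi$, obtained via Proposition~\ref{condense} from the two simple superalgebras $\bC$ and $\mathrm{Cliff}(1)$ in $\Omega\cC\cong\cat{SVec}$, with $\Pi$ invertible of order two and joined to $1_\cC$ by a nonzero $1$-morphism, whence the double cover of $\pi_0\cC$. The gap is in your first step. ``Re-running'' Theorem~\ref{theorem-bosonic} requires the fermionic analogue of Corollary~\ref{cor-pi0ismonoid}, i.e.\ that the tensor product of indecomposables is indecomposable, and this is exactly where the bosonic argument breaks: the state--operator correspondence (Lemma~\ref{stateoperatormap}) now says only that the \emph{even} part of the superalgebra $\int_{S^1_b}X$ of vertex operators is one-dimensional, and says nothing about odd vertex operators, so Proposition~\ref{SimpleX} does not carry over for free. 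The paper closes this with a genuinely new lemma --- $\int_{S^1_b}X$ is supercommutative (operators slide past each other on the cylinder), finite-dimensional and semisimple, and the only such superalgebra with one-dimensional even part is $\bC$, because odd elements square to zero and hence generate proper ideals that cannot be summands (projections are even). Your proposal calls the difference from the bosonic case ``bookkeeping'' and gestures at a ``Frobenius--Perron dimension and positivity count'' that is never set up; moreover the claim that $X\otimes X^\vee$ ``contains the unit via the coevaluation'' imports $1$-categorical intuition that fails here --- in a $2$-category a nonzero $\eta_X$ only places $1_\cC$ and $X\otimes X^*$ in the same \emph{component} (Proposition~\ref{condense}), it does not make $1_\cC$ a summand. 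So the invertibility step, which you yourself identify as the main obstacle, is not actually supplied.

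A secondary but genuine error: $\End_\cC(\Pi)\cong\cat{SVec}$, not $\cat{Vec}$. For any invertible $X$ one has $\End_\cC(X)\simeq\hom_\cC(1_\cC,X\otimes X^*)\simeq\Omega\cC$; concretely, $\mathrm{Fun}_{\cat{SVec}}(\cat{Vec},\cat{Vec})\simeq\mathrm{Bimod}_{\mathrm{Cliff}(1)}(\cat{SVec})\simeq\mathrm{Mod}_{\mathrm{Cliff}(1)\otimes\mathrm{Cliff}(1)^{\mathrm{op}}}(\cat{SVec})\simeq\cat{SVec}$, since $\mathrm{Cliff}(1)\otimes\mathrm{Cliff}(1)^{\mathrm{op}}\cong\mathrm{Cliff}(1,1)$ is Morita trivial. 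Consequently your stated reason that $\Pi\not\simeq 1_\cC$ (comparing endomorphism categories) does not work; the correct reason is simply that $\mathrm{Cliff}(1)$ is not Morita equivalent to $\bC$ in $\cat{SVec}$. Likewise the dichotomy ``$\End_\cC(X)$ is either $\cat{Vec}$ or $\cat{SVec}$'' is not an a priori constraint you can lean on --- after the theorem is proved it is always $\cat{SVec}$.
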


In particular, Theorem~\ref{theorem-super} asserts that the components of $\cC$ do form a group.

\begin{rem}
  Just as grouplike fusion $1$-categories in which the simple objects form a group $G$ are classified by $\H^3_{\gp}(G;\rU(1))$, the strongly fusion $2$-categories with simple objects $G$ are classified by $\rH_{\gp}^4(G;\rU(1))$ \cite[Remark 2.1.17]{douglas2018fusion}. In the fermionic case,
  if one additionally assumes that the actions of $\End(1_\cC) \cong \cat{SVec}$ on $\End(X)$ given by tensoring on the left and on the right agree, then
  one can show that the options with $\pi_0\cC = G$ are classified by ``extended group supercohomology'' $\SH_{\gp}^4(G)$ defined in \cite{Wang:2017moj}. There is a canonical map $\SH_{\gp}^4(G) \to \H_{\gp}^2(G; \bZ_2)$ which takes an extended supercohomology class to its \define{Majorana layer}; the group of simple objects in $\cC$ is the corresponding central extension $\bZ_2.G$. Although in general Majorana layers of supercohomology classes have no reason to be trivial, we were unable to find an example where the extension $\bZ_2.G$ did not split.
\end{rem}

The outline of our paper is as follows. Section~\ref{section-DR} reviews the definition of multifusion $2$-category from \cite{douglas2018fusion}. In particular, we recall their notion of ``component'' of a semisimple $2$-category in \S\ref{section-DR}. The proofs of Theorems~\ref{theorem-bosonic} and~\ref{theorem-super} occupy \S \ref{grouplike} and \S\ref{SVecCase}, respectively.

In future work, we will use these theorems to give a complete classification of $5$-spacetime-dimensional topological orders.

\section{Semisimple and multifusion 2-categories} \label{section-DR}

The definition and basic theory of semisimple and multifusion $2$-categories were first introduced in \cite{douglas2018fusion}. Since this theory is new, we take this section to review the main features.

Recall that a $2$-category $\cC$ is \define{$\bC$-linear} if all hom-sets of $2$-morphisms are vector spaces over $\bC$, and both $1$- and $2$-categorical compositions of $2$-morphisms are bilinear.


\begin{definition}
  An object in a linear 2-category is \define{decomposible} if it is equivalent to a direct sum of nonzero objects, and \define{indecomposable} if it is nonzero and not decomposable.
\end{definition}

\begin{rem}
We will slightly abuse the language and use the terms ``simple'' and ``indecomposable'' interchangeably. A \define{simple} object $X$ in a $2$-category is one such that any faithful $1$-morphism $A \hookrightarrow X$ is either $0$ or an equivalence. In finite semisimple $2$-categories all indecomposable objects are simple \cite{douglas2018fusion}.
\end{rem}

In particular the objects which we consider in the 2-category will only be sums of finitely many simple objects, and decompositions are unique up to permutations.  In our goal to define a semisimple 2-category, we present some definitions for the higher categorical generalization of the notion of idempotent splitting and idempotent complete for 1-categories, also discussed in \cite{gaiotto2019condensations}.

\begin{definition}
  A 2-category $\cC$  is  \define{locally idempotent complete}  if for all objects $A, B \in \cC$, the 1-category $\hom_\cC(A,B)$ is idempotent complete. It is \define{locally finite semisimple} if $\hom_\cC(A,B)$ is furthermore a finite semisimple $\bC$-linear category (i.e.\ an abelian $\bC$-linear category with finitely many isomorphism classes of simple object and in which every object decomposes as a finite direct sum of simple objects).
\end{definition}
In what follows, we will assume $\cC$ is a locally idempotent complete 2-category.  
\begin{definition}
  A \define{separable monad} is a unital algebra object $p \in \hom_\cC(A,A)$, for a simple object $A$, whose multiplication $m: p \circ p \to p$ admits a section as a $p$-$p$ bimodule. 
\end{definition}

\begin{definition}
  A \define{(unital) condensation} in a $2$-category $\cC$ is an  adjunction $f \dashv g \equiv (f : A \leftrightarrows B : g, \eta : 1_A \to g\circ f, \epsilon : f\circ g \to 1_B)$  which is \define{separable} in the sense that its counit $\epsilon$ admits a section, i.e.\ if there exists a $2$-morphsism $\phi : 1_B \to f\circ g$ which is the right inverse of $\epsilon$. 
  When there is such a separable adjuction, we will write ``$A \condense B$,'' and say that \define{$A$ condenses onto $B$}.
\end{definition}

\begin{definition}
  A separable monad $p$ is \define{separably split} if there exists a separable adjunction $f \dashv g$ and $g \circ f \cong p$.  A \define{separable splitting} is a choice of this isomorphism.
\end{definition}

\begin{proposition}[{\cite[Proposition 1.3.4]{douglas2018fusion}}]
   A separable monad in $\cC$ which admits a separable splitting, admits a unique up-to-equivalence separable splitting. 
\end{proposition}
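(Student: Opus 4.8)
The plan is to show that any separable splitting of $p$ corepresents a $2$-functor that depends only on $p$ (not on the splitting), so that any two splittings are identified by the $2$-categorical Yoneda lemma. Fix a separable splitting $(f : A \leftrightarrows B : g,\ \eta,\ \epsilon,\ \phi)$ together with an isomorphism $\theta : g \circ f \cong p$ of separable monads. For every object $X$ and every $k \in \hom_\cC(X,B)$, the counit makes $g \circ k$ a left $p$-module: transporting along $\theta$, the action $p \circ (g\circ k) \cong g \circ f \circ g \circ k \xrightarrow{\,g\,\epsilon\, k\,} g \circ k$ is associative and unital by the triangle identities for $f \dashv g$. This defines a functor $\Phi_X : \hom_\cC(X,B) \to \mathrm{LMod}_p(\hom_\cC(X,A))$, $k \mapsto g\circ k$, which is manifestly $2$-natural in $X$.

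The substance of the argument is that each $\Phi_X$ is an equivalence. For full faithfulness I would use that $1_B$ is a retract of $f\circ g$ via $(\phi,\epsilon)$: this exhibits every $k$ as a retract of $f\circ(g\circ k)$, so a $2$-morphism $k \to k'$ can be reconstructed from a module map $g\circ k \to g\circ k'$ by conjugating its image under $f$ with the maps $\phi$ and $\epsilon$, the module-compatibility being exactly what makes this reconstruction inverse to $\Phi_X$ on $2$-morphisms. For essential surjectivity, given a $p$-module $(n,\rho)$ in $\hom_\cC(X,A)$, I would realize it by splitting the canonical endomorphism of the free module $f\circ n \in \hom_\cC(X,B)$ assembled from $\rho$, $\eta$, and $\phi$; separability of $p$ (the section of the multiplication $m$) is what forces this endomorphism to be idempotent, and local idempotent completeness of $\hom_\cC(X,B)$ then provides a splitting $k$ with $g\circ k \cong n$ as modules. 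I expect this to be the main obstacle: one must verify both that the constructed endomorphism is genuinely idempotent and that its splitting recovers $(n,\rho)$ exactly rather than up to a further retract, and this is the single step where the separable structure is indispensable, mirroring the existence half of the splitting.

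Granting these two points, $\Phi_X$ assembles into a $2$-natural equivalence $\hom_\cC(-,B) \simeq \mathrm{LMod}_p(\hom_\cC(-,A))$, and the target depends only on the separable monad $p$. Thus if $(f_1 \dashv g_1, B_1)$ and $(f_2 \dashv g_2, B_2)$ are two separable splittings of $p$, composing their associated equivalences yields a $2$-natural equivalence $\hom_\cC(-,B_1) \simeq \hom_\cC(-,B_2)$, and the $2$-categorical Yoneda lemma produces an equivalence $w : B_1 \isom B_2$. Finally, tracing the modules $g_1$ and $1_{B_1}$ through these equivalences shows that $w$ intertwines the two adjunctions up to coherent $2$-isomorphism, $w\circ f_1 \cong f_2$ and $g_1 \cong g_2 \circ w$, compatibly with $\theta_1$ and $\theta_2$; this is precisely the assertion that the separable splitting is unique up to equivalence.
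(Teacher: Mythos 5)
The paper offers no proof of this proposition: it is quoted from Douglas and Reutter's Proposition 1.3.4, so there is no internal argument to compare yours against. That said, your strategy --- exhibit any splitting object $B$ as corepresenting $X \mapsto \mathrm{LMod}_p(\hom_\cC(X,A))$ via the comparison functor $k \mapsto g\circ k$, then invoke the $2$-categorical Yoneda lemma --- is the standard route, and it is in substance how the cited source argues; the paper itself hints at this immediately after the statement by remarking that a separable splitting realizes an Eilenberg--Moore object. The two verifications you defer do go through. Full faithfulness works exactly as you say: $\beta \mapsto g\beta$ has inverse $\alpha \mapsto \epsilon_{k'}\circ f\alpha\circ \phi_k$, using naturality of $\epsilon$ in one direction and the module axiom for $\alpha$ in the other. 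For essential surjectivity the relevant endomorphism of $f\circ n$ is $e = f\rho \circ \phi_{f\circ n}$, and its idempotency follows from the interchange law together with $\epsilon\circ\phi = 1_{1_B}$ and associativity of $\rho$; note that it is the section of the counit $\epsilon$, rather than the bimodule section of the multiplication $m$, that directly forces $e^2=e$ --- the latter is the induced $g\phi f$, and is what you need when checking that the splitting of $e$ recovers $(n,\rho)$ on the nose, via the mutually inverse module maps assembled from $\eta_n$ and $\rho$. Splitting $e$ at all uses local idempotent completeness of $\hom_\cC(X,B)$, which is a standing hypothesis in this setting and should be cited explicitly. With those computations written out, your argument is complete and agrees with the proof in the cited source.
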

Admitting a separable splitting implies that the adjunction $f \dashv g$ admits $A$ as an Eilenberg-Moore object.  In 1-categories, this is can be seen as module decomposition by forming a projector from an idempotent.  The subtlety in 2-categories is that now there is no ``orthogonal complement" to the projector, as in 1-categories.   

\begin{definition}\label{2IdempotentComplete}
  A 2-category $\cC$ is \define{2-idempotent complete} if it is locally idempotent complete and every separable monad splits.  
\end{definition}

\begin{rem}
Requiring the unitality of $p$ and the existence of a unit for adjunction in the 2-category case differs slightly from the situation in 1-categories.  In 1-categories there is an equality of $p^2=p$ but there is no equality of $1$ and $p$.  \cite{gaiotto2019condensations} developed a nonunital version of separable monad for 2-categories and showed that if $\cC$ has adjoints for 1-morphisms, then the notion of 2-idempotent completion in Definition~\ref{2IdempotentComplete} and in \cite{gaiotto2019condensations} agree.
\end{rem}

\begin{definition}\label{defn.semisimple2cat}
  A $\bC$-linear $2$-category $\cC$ is \define{finite semisimple} if: it has finitely many isomorphism classes of simple objects;
  it is locally finite semisimple; has adjoints for $1$-morphisms; has direct sums of objects; and is $2$-idempotent complete.
\end{definition}

\begin{definition}
  A \define{multifusion $2$-category} is a monoidal finite semisimple $2$-category in which all objects have duals.
\end{definition}

\begin{rem}
  As noted in \cite[Definition 2.1.6]{douglas2018fusion}, in a fusion 2-category, left and right duals are the same.
\end{rem}

The $1$-categorical Schur's Lemma says that in a semisimple $1$-category, if two indecomposable objects are related by a nonzero morphism, then they are isomorphic. This result fails in $2$-categories, but \cite[Proposition 1.2.19]{douglas2018fusion} provides the following replacement.

\begin{proposition}[Categorical Schur's Lemma]
  If $f : X\to Y$ and $g : Y \to Z$ are nonzero $1$-morphisms between indecomposable objects in a semisimple $2$-category $\cC$, then $gf : X \to Z$ is nonzero.
\end{proposition}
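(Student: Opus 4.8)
The plan is to deduce the claim from the behaviour of adjoints together with the semisimplicity of the endomorphism category $\hom_\cC(Y,Y)$. Since $\cC$ has adjoints for $1$-morphisms and locally finite semisimple hom-categories (Definition~\ref{defn.semisimple2cat}), I fix a right adjoint $f \dashv f^R$, with $f^R : Y \to X$, unit $\eta : 1_X \to f^R \circ f$ and counit $\epsilon : f \circ f^R \to 1_Y$. The first step is to note that $\epsilon \neq 0$: writing juxtaposition for whiskering (horizontal composition with an identity $2$-morphism), the triangle identity $(\epsilon\, f)\circ(f\,\eta) = 1_f$ shows that $\epsilon = 0$ would give $1_f = 0$, i.e.\ $f \cong 0$, contrary to hypothesis.

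The key step is to upgrade $\epsilon$ to a splitting. Because $Y$ is simple (indecomposable), its identity $1_Y$ is a simple object of the finite semisimple category $\hom_\cC(Y,Y)$; this is the one non-formal input, and I would take it from the structure theory of simple objects in \cite{douglas2018fusion}. Granting it, the nonzero $2$-morphism $\epsilon : f \circ f^R \to 1_Y$ has target a simple object, hence is an epimorphism, hence splits: there is a section $\phi : 1_Y \to f \circ f^R$ with $\epsilon \circ \phi = 1_{1_Y}$. In other words $1_Y$ is a retract of $f \circ f^R$. Whiskering this retraction on the left by $g$ and using the unit isomorphism $g \circ 1_Y \cong g$, the $2$-morphisms $g\,\phi : g \to g \circ f \circ f^R$ and $g\,\epsilon : g \circ f \circ f^R \to g$ satisfy $(g\,\epsilon)\circ(g\,\phi) = g\,(\epsilon \circ \phi) = 1_g$, so that $g$ is a retract of $g \circ f \circ f^R$.

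To finish, I would observe that a retract of a zero object is a zero object, and that horizontal composition in a $\bC$-linear $2$-category is additive in each variable, so $g \circ f \cong 0$ would force $g \circ f \circ f^R = (g \circ f) \circ f^R \cong 0$ and hence $g \cong 0$, contradicting $g \neq 0$. Therefore $g \circ f \neq 0$. I expect the only real obstacle to be the structural fact used above---that simplicity of $Y$ makes $1_Y$ simple in $\hom_\cC(Y,Y)$, which is exactly what forces the (manifestly nonzero) counit to be epi and therefore split. Everything else is a formal consequence of having adjoints and of semisimplicity of the hom-categories; in particular, the argument uses only that $Y$ is simple and that $f$ and $g$ are nonzero, not the simplicity of $X$ or $Z$.
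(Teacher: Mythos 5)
Your proof is correct, and it rests on exactly the same non-formal input as the paper's: because $Y$ is simple, $1_Y$ is simple in the semisimple category $\End_\cC(Y)$, so the (nonzero, by the triangle identity) counit $\epsilon : f\circ f^R \to 1_Y$ splits. This is precisely the content of the paper's Proposition~\ref{condense}, which produces a condensation $X \condense Y$. Where you diverge is in the assembly: the paper builds condensations $X \condense Y$ and $Y \condense Z$ separately, composes them, and invokes the facts that a composite of condensations is a condensation and that a condensation onto a nonzero object is nonzero. You instead use only the single adjunction for $f$, whisker the retraction $1_Y \hookrightarrow f\circ f^R$ by $g$ to exhibit $g$ as a retract of $(g\circ f)\circ f^R$, and conclude by additivity of horizontal composition. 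Your route is slightly more economical --- it needs no adjoint for $g$, no composition-of-condensations lemma, and, as you note, no simplicity hypothesis on $X$ or $Z$ --- at the cost of not producing the condensation $X \condense Z$ itself, which the paper wants anyway (it is the substance of Proposition~\ref{condense} and is reused later). Both arguments are sound; yours isolates the minimal hypotheses, the paper's packages the same splitting into a reusable structure.
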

\begin{proof}
 This follows from Proposition~\ref{condense} below, since the composition of condensations is a condensation and since condensations with nonzero target are nonzero.
\end{proof}

In particular, ``related by a nonzero morphism'' defines an equivalence relation on the indecomposable objects of $\cC$. (Note that, since every $1$-morphism is required to have an adjoint, if there is a nonzero morphism $f : X \to Y$, then there is a nonzero morphism $f^* : Y \to X$.)

\begin{definition} \label{defn.component}
  The set of \define{components} of $\cC$, denoted $\pi_0 \cC$, is the set of equivalence classes of indecomposable objects for the equivalence relation ``related by a nonzero morphism.''
\end{definition}

The structure of each component is fully determined by (the endomorphism category of) any representative object. Indeed:

\begin{proposition}\label{condense}
  Suppose $X,Y \in \cC$ are simple objects connected by a nonzero $1$-morphism $f : X \to Y$. Then there is a condensation $X \condense Y$. In particular, $Y$ is the image of a simple algebra object in the fusion $1$-category $\End_\cC(X)$. 
 \end{proposition}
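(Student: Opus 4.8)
The plan is to exhibit $Y$ by condensing a separable algebra manufactured from $f$ and its adjoint. Since $\cC$ has adjoints for $1$-morphisms, I would first choose a right adjoint $g\colon Y\to X$ of $f$, with unit $\eta\colon 1_X\to g\circ f$ and counit $\epsilon\colon f\circ g\to 1_Y$. The algebra to be condensed will be the induced monad $p=g\circ f\in\End_\cC(X)$, whose multiplication is the whiskered counit $1_g*\epsilon*1_f\colon p\circ p\to p$ and whose unit is $\eta$.

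The crux of the first assertion is to split the counit. Because $f\neq0$ we have $1_f\neq0$, so the triangle identity expressing $1_f$ through $\eta$ and $\epsilon$ forces $\epsilon\neq0$. Now $\End_\cC(Y)$ is a fusion $1$-category (as $Y$ is simple), so its unit $1_Y$ is simple, and a nonzero morphism $\epsilon\colon f\circ g\epi 1_Y$ onto a simple object in a finite semisimple category is a split epimorphism. Choosing a splitting and rescaling by the resulting nonzero scalar (Schur's lemma gives $\mathrm{Hom}(1_Y,1_Y)=\bC$), I obtain $\phi\colon 1_Y\to f\circ g$ with $\epsilon\phi=1_{1_Y}$. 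This $\phi$ is precisely a section of the counit, so $f\dashv g$ is a separable adjunction, i.e.\ a condensation $X\condense Y$.

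For the ``in particular,'' I would upgrade $\phi$ to a separable structure on $p$. The candidate bimodule section of the multiplication is $s=1_g*\phi*1_f\colon p\to p\circ p$; that $s$ is a one-sided section of $1_g*\epsilon*1_f$ is immediate from $\epsilon\phi=1$, and the $p$-$p$-bimodule compatibility of $s$ is verified using the interchange law. (Alternatively, the multiplication $p\circ p\to p$ is a surjection of $p$-$p$-bimodules that splits as a plain $2$-morphism, and one splits it as bimodules using semisimplicity.) Thus $p$ is a separable monad, so by $2$-idempotent completeness it splits, and by the uniqueness of separable splittings it splits as the object exhibited by the adjunction, namely $Y$. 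Hence $Y$ is the image of $p\in\End_\cC(X)$. Finally $p$ is simple: a decomposition of $p$ as a direct sum of algebras would induce a decomposition of its splitting, but the splitting $Y$ is a simple object.

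The step I expect to be the main obstacle is the genuine (two-sided) separability of the induced monad: a plain section of a split counit does not automatically whisker to a section of the multiplication respecting both $p$-actions, so this is where the argument must really use either the coherence of the adjunction through the interchange law or the semisimplicity of the ambient categories. By contrast, the nonvanishing of $\epsilon$, the existence of the counit's section, and the simplicity bookkeeping are routine.
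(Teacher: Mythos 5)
Your proof is correct and follows the same route as the paper: choose the right adjoint $g$, note that the counit $\epsilon\colon f\circ g\to 1_Y$ is nonzero with simple target in the semisimple category $\End_\cC(Y)$, and split it. The additional material on upgrading $\phi$ to a bimodule section of the multiplication on $p=g\circ f$ (via the interchange law) and invoking uniqueness of separable splittings is a correct unpacking of the ``in particular'' clause, which the paper leaves implicit.
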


\begin{proof}
Choose $g$ to be the right adjoint to $f$; it exists because all morphisms in a semisimple $2$-category are required to have adjoints. The counit $\epsilon : f\circ g \to 1_Y$ is a nonzero $1$-morphism in the semisimple $1$-category $\End_\cC(Y)$ with simple target, and so has a section.
%
\end{proof}

\section{Proof of Theorem~\ref{theorem-bosonic}}\label{grouplike}

We begin this section by developing the necessary graphical calculus in order to prove the main results.  One important feature we will discuss is the state-operator map for fusion  2-categories and its interplay with duality.  
For an object $X\in \cC$, we denote $\int_{S^1_b}X$ as the wrapping of $X$ around a boundary-framed $S^1$, see Figure \ref{wrapS1}. 
This integral is a map $\int_{S^1_b}: \cC \to \Omega \cC $.  This integral is an example of the general calculus of dualizability as in \cite{freed2020gapped}, and also arising from the cobordism hypothesis \cite{Baez_1995}. 
\begin{figure}[t]

\begin{tikzpicture}

\centering
      \def\Depth{4}
        \def\Height{2}
        \def\Width{2}
        \coordinate (O) at (-3,0+1,0-1);
        \coordinate (A) at (-3,\Width+1.5,0-1);
        \coordinate (B) at (-3,\Width+1.5,\Height);
        \coordinate (C) at (-3,0+1,\Height);
         \coordinate (W) at (-3,0+1,0-1);
        \coordinate (X) at (-3,\Width+1.5,0-1);
        \coordinate (Y) at (-3,\Width+1.5,\Height);
        \coordinate (Z) at (-3,0+1,\Height);
\draw[  left color=white, white] (O)--(A)--(B)--(C)--cycle;
\draw[  left color=blue!30,
  right color=blue!60,
  middle color=red!20, black] (W)--(X)--(Y)--(Z)--cycle;
  \draw[black] (-3,2.1,0.5) node{$X$};
   \draw[->,decorate,decoration={snake,amplitude=.4mm,segment length=2mm,post length=1mm}] (-1.75,2.5,1) -- (0,2.5, 1);
   \draw[black] (-.9,2.9,1) node{$\int_{S^1_b}$};

\centering
\node [draw,
  shape=cylinder,
  name=nodename, 
  alias=cyl, 
  aspect=1.5,
  minimum height=3cm,
  minimum width=2cm,
  left color=blue!30,
  right color=blue!60,
  middle color=red!20, 
  outer sep=-0.5\pgflinewidth, 
  shape border rotate=90
] at (1,2) {$X$};

\fill [black!0.5] let
  \p1 = ($(cyl.before top)!0.5!(cyl.after top)$),
  \p2 = (cyl.top),
  \p3 = (cyl.before top),
  \n1={veclen(\x3-\x1,\y3-\y1)},
  \n2={veclen(\x2-\x1,\y2-\y1)}
 in 
  (\p1) ellipse (\n1 and \n2);

\end{tikzpicture}
\caption{}
\label{wrapS1}
\end{figure}
The boundary framing of the cylinder in Figure \ref{wrapS1} is attained from the framing of the annulus, where the annulus framing is given by the restriction of the two-dimensional blackboard framing, see Figure \ref{AnnulusFraming}. One could then take the framed annulus and pull the annulus into a cylinder.  This results in a cylinder appropriately framed to be compatible with the state-operator map.

\begin{figure}[b]
    \begin{tikzpicture}
    \draw[left color=blue!30,
  right color=blue!60,
  middle color=red!20]  (2,0) circle (2);
  \draw[left color=white!20,
  right color=white!20,
  middle color=white!20] (2,0) circle (1);
    \draw[->,decorate, line width = .3mm] (.5,0) -- (.75,0);
    \draw[->,decorate,line width = .3mm] (.5,0) -- (.5,.25);
    \draw[->,decorate, line width = .3mm] (0.939340,1.06066) -- (0.939340+.25,1.06066);
    \draw[->,decorate,line width = .3mm] (0.939340,1.06066) -- (0.939340,1.06066+.25);
    \draw[->,decorate, line width = .3mm] (0.939340,-1.06066) -- (0.939340+.25,-1.06066);
    \draw[->,decorate,line width = .3mm] (0.939340,-1.06066) -- (0.939340,-1.06066+.25);
    \draw[->,decorate, line width = .3mm] (3.06066,1.06066) -- (3.06066+.25,1.06066);
    \draw[->,decorate,line width = .3mm] (3.06066,1.06066) -- (3.06066,1.06066+.25);
    \draw[->,decorate, line width = .3mm] (3.06066,-1.06066) -- (3.06066+.25,-1.06066);
    \draw[->,decorate,line width = .3mm] (3.06066,-1.06066) -- (3.06066,-0.81066);
    \draw[->,decorate,line width = .3mm] (3.5,0) -- (3.75,0);
    \draw[->,decorate,line width = .3mm] (3.5,0) -- (3.5,.25);
    \draw[->,decorate,line width = .3mm]  (2,-1.5)--(2.25,-1.5);
    \draw[->,decorate, line width = .3mm] (2,-1.5) -- (2,-1.25);
    \draw[->,decorate,line width = .3mm] (2,1.5) -- (2.25,1.5);
    \draw[->,decorate, line width = .3mm] (2,1.5) -- (2,1.75);
    \end{tikzpicture}
    \caption{}
    \label{AnnulusFraming}
\end{figure}

We now describe this operation $\int_{S^1_b}$ algebraically.
Because we are working with a fusion 2-category each object has a dual and we have a unit $\eta_X: 1_{\cC} \to X \otimes X^*$.  
It corresponds to the half-circle with framing as in Figure~\ref{2rings} part~(a).
Also, since all 1-morphisms have adjoints, there is a right adjoint $\eta^*_X:  X \otimes X^* \to 1_{\cC} $, corresponding to the framed half-circle in Figure~\ref{2rings} part~(b). These two half-circles compose to an annulus whose framing can be continuously deformed to framing in Figure~\ref{AnnulusFraming}. All together, we find the algebraic definition:
$$ \int_{S^1_b} X :=  \eta^*_X \circ  \eta_X.$$
%
%
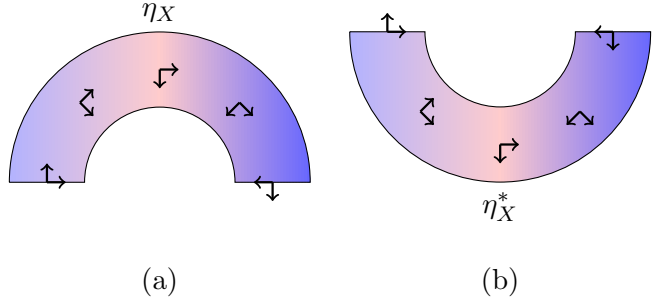
\begin{figure}[t]
    \begin{tikzpicture}
    \draw[left color=blue!30,
  right color=blue!60,
  middle color=red!20] (0,0) -- (1,0) arc (180:0:1) -- (3,0) -- (4,0) arc (0:180:2);
    \draw[->,decorate,line width = .3mm] (2,1.5) -- (2.25,1.5);
    \draw[->,decorate, line width = .3mm] (2,1.5) -- (2,1.25);
    \draw[->,decorate, line width = .3mm] (0.939340,1.06066) -- (0.939340+0.178,1.06066-0.178);
    \draw[->,decorate,line width = .3mm] (0.939340,1.06066) -- (0.939340+0.178,1.06066+0.178);
    \draw[->,decorate, line width = .3mm] (.5,0) -- (.75,0);
    \draw[->,decorate,line width = .3mm] (.5,0) -- (.5,.25);
    
    \draw[->,decorate, line width = .3mm] (3.06066,1.06066) -- (3.06066+0.178,1.06066-0.178);
    \draw[->,decorate,line width = .3mm] (3.06066,1.06066) -- (3.06066-0.178,1.06066-0.178);
    
    \draw[->,decorate,line width = .3mm] (3.5,0) -- (3.25,0);
    \draw[->,decorate,line width = .3mm] (3.5,0) -- (3.5,-.25);
    \draw[above] (2,2) node{$\eta_X$};
    \draw[below]  (2,-1) node{(a)};
    \end{tikzpicture}
    \quad 
    \begin{tikzpicture}
    \draw[left color=blue!30,
  right color=blue!60,
  middle color=red!20] (0,0) --(1,0) arc (180:360:1) -- (3,0) -- (4,0) arc (0:-180:2); 
    \draw[->,decorate, line width = .3mm] (.5,0) -- (.75,0);
    \draw[->,decorate,line width = .3mm] (.5,0) -- (.5,.25);
    \draw[->,decorate,line width = .3mm] (3.5,0) -- (3.25,0);
    \draw[->,decorate,line width = .3mm] (3.5,0) -- (3.5,-.25);

    \draw[->,decorate, line width = .3mm] (0.939340,-1.06066) -- (0.939340+0.178,-1.06066-0.178);
    \draw[->,decorate,line width = .3mm] (0.939340,-1.06066) -- (0.939340+0.178,-1.06066+0.178);
    
    \draw[->,decorate, line width = .3mm] (3.06066,-1.06066) -- (3.06066+0.178,-1.06066-0.178);
    \draw[->,decorate,line width = .3mm] (3.06066,-1.06066) -- (3.06066-0.178,-1.06066-0.178);

    \draw[->,decorate,line width = .3mm]  (2,-1.5)--(2.25,-1.5);
    \draw[->,decorate, line width = .3mm] (2,-1.5) -- (2,-1.75);
    \draw[below] (2,-2) node{$\eta^*_X$};
    \draw[below] (2,-3) node{(b)};
    \end{tikzpicture}
    \caption{$\eta^*_X$ is by definition the universal map such that the composition with $\eta_X$ can be filled.  The resulting framing of $\eta^*_X \circ \eta_X$ is homotopic to the blackboard framing of Figure~\ref{AnnulusFraming}. }
    \label{2rings}
\end{figure}

The vertex operators of $X$ are by definition the $2$-morphisms $1_X \Rightarrow 1_X$. They are precisely the operators that can be inserted in the interior hole in Figure~\ref{AnnulusFraming}; the blackboard framing is arranged so that this can happen.
Such an insertion may be pulled down and thought of as a map $1_{1_\cC} \to \int_{S_1^b} X$ as in Figure~\ref{stateMap}. In other words, $\hom(1_{1_{\cC}},\int_{S^1_b} X )$ is the vector space of ways for the vacuum line to end on $\int_{S^1_b} X$. This is the physical/geometric proof of the \define{state-operator correspondence}. 
\begin{figure}[b]
\begin{tikzpicture}

\centering
      \def\Depth{4}
        \def\Height{2}
        \def\Width{2}
        \coordinate (O) at (1,0+1,0-1);
        \coordinate (A) at (1,\Width+1.5,0-1);
        \coordinate (B) at (1,\Width+1.5,\Height);
        \coordinate (C) at (1,0+1,\Height);
         \coordinate (W) at (-3,0+1,0-1);
        \coordinate (X) at (-3,\Width+1.5,0-1);
        \coordinate (Y) at (-3,\Width+1.5,\Height);
        \coordinate (Z) at (-3,0+1,\Height);
\draw[  left color=white, white] (O)--(A)--(B)--(C)--cycle;
\node [draw,
  shape=cylinder,
  name=nodename, 
  alias=cyl, 
  aspect=1.5,
  minimum height=3cm,
  minimum width=2cm,
  left color=blue!30,
  right color=blue!60,
  middle color=red!20, 
  outer sep=-0.5\pgflinewidth, 
  shape border rotate=90
] at (1,2) {$X$};

\fill [black!0.5] let
  \p1 = ($(cyl.before top)!0.5!(cyl.after top)$),
  \p2 = (cyl.top),
  \p3 = (cyl.before top),
  \n1={veclen(\x3-\x1,\y3-\y1)},
  \n2={veclen(\x2-\x1,\y2-\y1)}
 in 
  (\p1) ellipse (\n1 and \n2);
  
  \coordinate (X) at (1,.70);
  \coordinate (Y) at  (1,-0);
  \draw[dotted ,line width = .7 mm, opacity = .4] (X) -- (Y);
\draw (1,-.3) node {$1_{1_{\cC}}$};
\end{tikzpicture}
\caption{}
\label{stateMap}
\end{figure}
Algebraically, we have:
\begin{lemma}[State-Operator Correspondence]\label{stateoperatormap}
In a {multifusion 2-category} there is an isomorphism $\End_{\End_{\cC}(X)}(1_{X}) \cong \hom_{\Omega\cC}(1_{1_{\cC}}, \int_{S^1_b} X )$. 
\end{lemma}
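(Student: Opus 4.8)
The plan is to exhibit the claimed isomorphism as a composite of two canonical isomorphisms of $2$-morphism spaces: the first coming from the object duality $X \dashv X^*$, the second from the $1$-morphism adjunction $\eta_X \dashv \eta_X^*$ that defines $\int_{S^1_b}X$. Unwinding notation, the left-hand side $\End_{\End_\cC(X)}(1_X)$ is the space of $2$-morphisms $1_X \Rightarrow 1_X$, and the right-hand side is the space of $2$-morphisms $1_{1_\cC} \Rightarrow \eta_X^*\circ\eta_X$; so it is enough to produce a linear bijection between these.

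First I would use the duality. Since $X^*$ is dual to $X$, tensoring yields a biadjunction $(-\otimes X) \dashv (-\otimes X^*)$ and hence an equivalence of hom-$1$-categories
\[
  \hom_\cC(X,X) \;\simeq\; \hom_\cC(1_\cC,\, X\otimes X^*),
\]
natural in both variables and implemented by whiskering with the coevaluation $\eta_X$ and the evaluation. The forward functor sends $\phi \in \hom_\cC(X,X)$ to $(\phi\otimes 1_{X^*})\circ\eta_X$, so in particular $1_X \mapsto \eta_X$. Because an equivalence of categories restricts to an isomorphism on the endomorphism algebra of any object, this already gives
\[
  \End_{\End_\cC(X)}(1_X) \;\cong\; \End_{\hom_\cC(1_\cC,\,X\otimes X^*)}(\eta_X).
\]

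Next I would invoke the adjunction $\eta_X \dashv \eta_X^*$, with unit $u : 1_{1_\cC} \Rightarrow \eta_X^*\circ\eta_X$ and counit $c : \eta_X\circ\eta_X^* \Rightarrow 1_{X\otimes X^*}$. The $2$-categorical mate correspondence gives, for $1$-morphisms $p : C \to 1_\cC$ and $q : C \to X\otimes X^*$, a natural bijection $\hom(\eta_X\circ p,\, q) \cong \hom(p,\, \eta_X^*\circ q)$; specializing to $C = 1_\cC$, $p = 1_{1_\cC}$, $q = \eta_X$ yields the linear isomorphism
\[
  \End_{\hom_\cC(1_\cC,\,X\otimes X^*)}(\eta_X) \;\cong\; \hom_{\Omega\cC}\!\left(1_{1_\cC},\, \int_{S^1_b}X\right).
\]
Concretely this bijection sends $\beta : 1_{1_\cC}\Rightarrow\eta_X^*\eta_X$ to its mate $(c\,\eta_X)\cdot(\eta_X\,\beta)$ and is linear because whiskering and vertical composition are bilinear; note that $u$ itself is the mate of $\mathrm{id}_{\eta_X}$. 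Composing the two displayed isomorphisms proves the lemma and matches the geometry of Figures~\ref{2rings}--\ref{stateMap}, where a vertex operator on $X$ is bent around the cup $\eta_X$ and read off as a state on $\int_{S^1_b}X$.

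The work is routine but the bookkeeping is real: both the duality equivalence and the mate correspondence hold only up to the associativity and unit $2$-isomorphisms of $\cC$, so one must check the composite is independent of these coherence choices and is indeed the expected linear map. The one conceptual point to keep straight --- and the thing I expect to be the main obstacle --- is that $\eta_X^*$ is the adjoint of the $1$-morphism $\eta_X$ inside $\hom_\cC(1_\cC, X\otimes X^*)$, a priori unrelated to the duality evaluation $X^*\otimes X \to 1_\cC$; the duality enters only in the first step (to identify $1_X$ with $\eta_X$), while the second step uses only the $1$-morphism adjunction, and the two must be composed in this order.
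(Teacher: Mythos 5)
Your proof is correct and follows essentially the same route as the paper's: first the duality equivalence $\End_\cC(X) \simeq \hom_\cC(1_\cC, X\otimes X^*)$ carrying $1_X$ to $\eta_X$, then the general fact that for an adjunctible $1$-morphism $f$ one has $\End(f) \cong \hom(1_{1_A}, f^*\circ f)$, applied to $f = \eta_X$. The only difference is that you spell out the mate correspondence underlying that second step, which the paper states without proof.
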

\begin{proof}
The duality of $X$ with $X^*$ provides an equivalence of $\End_{\cC}(X) \cong \hom(1_{\cC}, X \otimes X^*)$. This equivalence identifies $1_X$ with $\eta_X$, and so in particular $\End(1_{X}) \cong \End(\eta_X)$, where the left-hand side is computed in $\End_{\cC}(X)$ and the right-hand side is computed in $\hom(1_{\cC}, X \otimes X^*)$. For any adjunctible $1$-morphism $f : A \to B$ in a $2$-category, $\End_{\hom(A,B)}(f) \cong \hom_{\End A}(1_{1_A}, f^* \circ f)$. Taking $f = \eta_X$, with $A = 1_\cC$ and $B = X \otimes X^*$, completes the proof.
\end{proof}

 
 In particular, $X$ is simple if and only if $\hom_{\Omega\cC}(1_{1_\cC}, \int_{S^1_b}X)$ is one-dimensional.
 In the strongly fusion case lemma \ref{stateoperatormap} implies:



\begin{proposition}\label{SimpleX}
  Suppose $\cC$ is strongly fusion. Then $X \in \cC$ is indecomposable if and only if $\int_{S^1_b}X = \bC$. \qed
\end{proposition}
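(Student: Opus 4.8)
The plan is to deduce the proposition immediately from the criterion recorded just above it---that $X$ is simple if and only if $\hom_{\Omega\cC}(1_{1_\cC}, \int_{S^1_b}X)$ is one-dimensional---by exploiting the strongly fusion hypothesis $\Omega\cC \simeq \cat{Vec}_\bC$. Once $\Omega\cC$ is identified with finite-dimensional complex vector spaces, the object $\int_{S^1_b}X$ is, up to equivalence, some $\bC^n$; and since such an identification is monoidal it carries the unit $1_{1_\cC}$ to the ground field $\bC$.

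With this in hand I would simply compute
\[
  \hom_{\Omega\cC}\!\big(1_{1_\cC},\, \textstyle\int_{S^1_b}X\big)
  \;\cong\;
  \hom_{\cat{Vec}_\bC}(\bC,\bC^n)
  \;\cong\;
  \bC^n,
\]
which is one-dimensional exactly when $n=1$, i.e.\ exactly when $\int_{S^1_b}X \cong \bC$. Feeding this back into the criterion, and recalling that in a finite semisimple $2$-category ``simple'' and ``indecomposable'' are used synonymously, yields both directions of the equivalence at once.

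I do not expect a genuine obstacle here: the substance of the statement is already carried by Lemma~\ref{stateoperatormap} and the observation following it, and the only remaining input is the triviality $\hom_{\cat{Vec}_\bC}(\bC,V)\cong V$, which converts ``one-dimensional hom-space'' into ``one-dimensional object.'' The single point worth stating explicitly is the degenerate case $n=0$, where $\int_{S^1_b}X=0$ gives a zero-dimensional hom-space and hence a non-simple object; this is consistent, since a nonzero indecomposable object always yields a positive-dimensional hom-space.
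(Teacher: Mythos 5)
Your argument is correct and matches the paper's intended proof exactly: the proposition is stated with an immediate \qed precisely because, as you say, it follows from Lemma~\ref{stateoperatormap} together with the identification $\Omega\cC \simeq \cat{Vec}_\bC$, under which $\hom_{\Omega\cC}(1_{1_\cC}, \int_{S^1_b}X)$ becomes $\int_{S^1_b}X$ itself. Your explicit handling of the degenerate case $\int_{S^1_b}X = 0$ is a small but welcome addition.
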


We now consider the tensor product of two indecomposable objects $X\otimes Y$ mapped by the integral $\int_{S^1_{b}}$. This represents a cylinder within a cylinder as on the left of Figure \ref{RemoveInner}.

\begin{figure}[hbt!]
\begin{tikzpicture}\label{cylinders}
\node [draw,
  shape=cylinder,
  name=nodename, 
  alias=cyl, 
  aspect=1.5,
  minimum height=3cm,
  minimum width=3cm,
  left color=blue!30,
  right color=blue!60,
  middle color=red!20, 
  outer sep=-0.5\pgflinewidth, 
  shape border rotate=90
] at (1,2) {\phantom{A}};

\fill [black!0.5] let
  \p1 = ($(cyl.before top)!0.5!(cyl.after top)$),
  \p2 = (cyl.top),
  \p3 = (cyl.before top),
  \n1={veclen(\x3-\x1,\y3-\y1)},
  \n2={veclen(\x2-\x1,\y2-\y1)}
 in 
  (\p1) ellipse (\n1 and \n2);
  
    \node [draw,
  shape=cylinder,
  name=nodename2, 
  alias=cyl2, 
  aspect=0.7,
  minimum height=2.8cm,
  minimum width=1.7cm,
  left color=blue!30,
  right color=blue!60,
  middle color=red!20, 
  outer sep=-0.5\pgflinewidth, 
  shape border rotate=90,opacity=.5,
] at (1,2) {\phantom{A}};

\fill [black!0.5] let
  \p1 = ($(cyl2.before top)!0.5!(cyl2.after top)$),
  \p2 = (cyl2.top),
  \p3 = (cyl2.before top),
  \n1={veclen(\x3-\x1,\y3-\y1)},
  \n2={veclen(\x2-\x1,\y2-\y1)}
 in 
  (\p1) ellipse (\n1 and \n2);
   \coordinate (X) at (1,.7);
  \coordinate (Y) at  (1,-.3);
  \draw[dotted ,line width = .7 mm, opacity = 0] (X) -- (Y);
  
    \draw[->,decorate,decoration={snake,amplitude=.4mm,segment length=2mm,post length=1mm}] (3.5,2.5,1) -- (5,2.5, 1);

\end{tikzpicture}
\quad 
\begin{tikzpicture}
    \node [draw,
  shape=cylinder,
  name=nodenamenew, 
  alias=cylnew, 
  aspect=1.5,
  minimum height=3cm,
  minimum width=3cm,
  left color=blue!30,
  right color=blue!60,
  middle color=red!20, 
  outer sep=-0.5\pgflinewidth, 
  shape border rotate=90
] at (1,2) {\phantom{A}};

\fill [black!0.5] let
  \p1 = ($(cylnew.before top)!0.5!(cylnew.after top)$),
  \p2 = (cylnew.top),
  \p3 = (cylnew.before top),
  \n1={veclen(\x3-\x1,\y3-\y1)},
  \n2={veclen(\x2-\x1,\y2-\y1)}
 in 
  (\p1) ellipse (\n1 and \n2);
  
  \coordinate (X) at (1,.7);
  \coordinate (Y) at  (1,0);
  \draw[dotted ,line width = .7 mm, opacity = .4] (X) -- (Y);
  \draw (1,-.3) node {$1_{1_{\cC}}$};
\end{tikzpicture}
\caption{}
\label{RemoveInner}
\end{figure}
In general, we see that $\int_{S^1_b}$ is not monoidal: a cylinder within a cylinder is not the same as two adjacent cylinders. However, in the strongly fusion case, 
if $X$ and $Y$ are simple then 
we may collapse down the inner cylinder via the state operator map into the vacuum line. We may then collapse the outer cylinder. All together we find:
%
%
%
%
\begin{corollary}\label{cor-pi0ismonoid}
  In a strongly fusion 2-category, the tensor product of indecomposable objects is indecomposable. \qed
\end{corollary}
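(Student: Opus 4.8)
The plan is to reduce the statement entirely to the numerical criterion of Proposition~\ref{SimpleX}: since $\cC$ is strongly fusion, an object $Z$ is indecomposable precisely when $\int_{S^1_b}Z = \bC$. So, given simple $X,Y$ with $\int_{S^1_b}X = \int_{S^1_b}Y = \bC$, it suffices to compute $\int_{S^1_b}(X\otimes Y)$ and check that it is again $\bC$; a final appeal to Proposition~\ref{SimpleX} then delivers the corollary. This is exactly the algebraic content of the nested-cylinder picture in Figure~\ref{RemoveInner}.

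First I would express the duality data of $X\otimes Y$ in terms of that of $X$ and $Y$. Using $(X\otimes Y)^*\cong Y^*\otimes X^*$, the unit $\eta_{X\otimes Y}\colon 1_\cC \to X\otimes Y\otimes Y^*\otimes X^*$ factors as the nested composite $\eta_{X\otimes Y}\cong (1_X\otimes \eta_Y\otimes 1_{X^*})\circ \eta_X$, and dually $\eta^*_{X\otimes Y}\cong \eta^*_X\circ (1_X\otimes \eta^*_Y\otimes 1_{X^*})$. Pictorially this is just the statement that the cap creating $X\otimes Y$ together with its dual is drawn by first creating the outer pair $X,X^*$ via $\eta_X$ and then inserting the inner pair $Y,Y^*$ via $\eta_Y$ between them.

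Substituting into the defining formula $\int_{S^1_b}Z=\eta^*_Z\circ\eta_Z$ and using (pseudo)functoriality of $\otimes$ to merge the two inner maps, I would obtain
$$\int_{S^1_b}(X\otimes Y)\;\cong\;\eta^*_X\circ\bigl(1_X\otimes(\eta^*_Y\circ\eta_Y)\otimes 1_{X^*}\bigr)\circ\eta_X .$$
The middle factor is $1_X\otimes\bigl(\int_{S^1_b}Y\bigr)\otimes 1_{X^*}$, and since $Y$ is simple Proposition~\ref{SimpleX} gives $\int_{S^1_b}Y\cong 1_{1_\cC}$, the monoidal unit of $\Omega\cC$; hence this factor is $\cong 1_{X\otimes X^*}$. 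This is the algebraic form of ``collapsing the inner cylinder to the vacuum line.'' What remains is $\eta^*_X\circ\eta_X=\int_{S^1_b}X\cong\bC$ by simplicity of $X$, so $\int_{S^1_b}(X\otimes Y)\cong\bC$, as required.

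The main obstacle will be the coherence bookkeeping in the monoidal $2$-category: every equality above holds only up to the associator/unitor $2$-isomorphisms of $\cC$ and the constraints of the pseudofunctor $\otimes$, so one must verify that the displayed composites are genuinely isomorphic rather than merely formally equal. In particular, establishing the nested factorization of $\eta_{X\otimes Y}$ and justifying the step $1_X\otimes(\eta^*_Y\circ\eta_Y)\otimes 1_{X^*}\cong (1_X\otimes\eta^*_Y\otimes 1_{X^*})\circ(1_X\otimes\eta_Y\otimes 1_{X^*})$ is exactly where the graphical calculus developed above earns its keep; I would carry out that step diagrammatically, reading off the isomorphisms from the nested cap–cup picture, rather than manipulating associators by hand.
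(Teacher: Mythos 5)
Your proposal is correct and is essentially the paper's own argument: the paper proves this corollary via the nested-cylinder picture of Figure~\ref{RemoveInner}, collapsing the inner cylinder to the vacuum line by Proposition~\ref{SimpleX} applied to $Y$ and then collapsing the outer cylinder for $X$, which is exactly the algebraic computation $\int_{S^1_b}(X\otimes Y)\cong\eta^*_X\circ\bigl(1_X\otimes(\int_{S^1_b}Y)\otimes 1_{X^*}\bigr)\circ\eta_X\cong\int_{S^1_b}X\cong\bC$ that you carry out. Your version simply makes the duality-data bookkeeping explicit where the paper leaves it to the graphical calculus.
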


This allows us to complete the proof of Theorem~\ref{theorem-bosonic}:

\begin{proof}[Proof of Theorem~\ref{theorem-bosonic}]
  If $X \in \cC$ is a simple object, then $X^*$ is as well (since $\End(X) \cong \End(X^*)$), and hence so is $X \otimes X^*$ (by Corollary~\ref{cor-pi0ismonoid}). Since $\eta_X : 1_\cC \to X \otimes X^*$ is nonzero, the simple objects $1_\cC$ and $X \otimes X^*$ are in the same component. However, the fact that there are no lines in the strongly fusion case means that $1_\cC$ is the only simple object in its component.
\end{proof}


\begin{rem}
We can consider working over the real numbers, which is the same as having an anti-linear involution (time-reversal).  In this case, an indecomposable object is not absolutely simple, and Theorem~\ref{theorem-bosonic} is no longer true.  We can see this already at the level of fusion 1-categories. Consider a $\bZ_3$ fusion 1-category with three objects $\{ 1, x, x^{-1}\}$ over $\bC$.  Over the real numbers, we can exchange $x$ and $x^{-1}$
by the involution.  There will be two objects $1$ and $X$ over the real numbers, where $X \cong x + x^{-1}$, so that it is invariant under the involution. Schur's lemma states that over the complex numbers, indecomposable means that the endomorphisms of the object is just $\bC$.  But over the real numbers, the endomorphisms are a division ring, and we have the fusion $X^2 = X + 2$.
\end{rem}

\section{Proof of Theorem~\ref{theorem-super}}\label{SVecCase}

If we try to repeat the proof from \S\ref{grouplike} when $\Omega\cC = \End(1_\cC) \cong \cat{SVec}$, the first snag arises in Proposition~\ref{SimpleX}. 
Indecomposability of $X$ implies that the ordinary vector space $\hom(1_{1_\cC}, \int_{S^1_b}X  )$ is one-dimensional. This measures the even part of the super vector space $\int_{S^1_b}X$, but says nothing about the odd part. 
On the other hand, the super vector space $\int_{S^1_b} X$ is the superalgebra of vertex operators on $X$, and so it is supercommutative because we have the freedom to move operators around each other on the surface of the cylinder. Furthermore, since we are working in a semisimple $2$-category, this supercommutative algebra is finite dimensional and semisimple.

\begin{lemma}
  The only finite-dimensional semisimple supercommutative superalgebra $A$ with one-dimensional bosonic part is $\bC$.
\end{lemma}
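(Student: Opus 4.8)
The plan is to show that supercommutativity, associativity, and semisimplicity together force the odd part of $A$ to vanish. Write $A = A_0 \oplus A_1$ for the decomposition into bosonic (even) and fermionic (odd) parts. Since the unit $1$ is bosonic, the hypothesis that $A_0$ is one-dimensional says exactly that $A_0 = \bC\cdot 1 \cong \bC$. Multiplication then restricts to a bilinear map $A_1 \times A_1 \to A_0 = \bC$, which I record as a bilinear form $\omega(x,y) := xy \in \bC$. Supercommutativity gives $xy = -yx$ for $x,y \in A_1$, so $\omega$ is antisymmetric (and, since we are over $\bC$, alternating: $x^2 = 0$ for every odd $x$).

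The key step is that associativity forces $\omega = 0$. Indeed, for $x,y,z \in A_1$ one has $(xy)z = \omega(x,y)\,z$ and $x(yz) = \omega(y,z)\,x$, so associativity reads
$$ \omega(x,y)\,z = \omega(y,z)\,x \qquad \text{for all } x,y,z \in A_1. $$
Setting $z = x$ and using antisymmetry $\omega(y,x) = -\omega(x,y)$ yields $\omega(x,y)\,x = -\omega(x,y)\,x$, hence $2\,\omega(x,y)\,x = 0$, so $\omega(x,y)\,x = 0$ for all $x,y \in A_1$. Thus $\omega(x,y) = 0$ whenever $x \neq 0$, i.e.\ $A_1 \cdot A_1 = 0$.

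Once $A_1 \cdot A_1 = 0$, the odd part $A_1$ is a homogeneous two-sided ideal (we have $A_0 A_1, A_1 A_0 \subseteq A_1$ by the grading, and now $A_1 A_1 = 0$) which squares to zero. A nonzero square-zero ideal lies in the (graded) Jacobson radical, contradicting semisimplicity of $A$. Hence $A_1 = 0$, and therefore $A = A_0 = \bC$.

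I expect the only genuinely substantive point to be the associativity computation giving $\omega = 0$. The naive worry is that an odd generator $\theta$ with $\theta^2 = 0$ might survive --- this is exactly the exterior algebra $\Lambda^\bullet \bC^1$, whose bosonic part is indeed one-dimensional --- and the content of the lemma is precisely that this algebra fails to be semisimple. Everything after $\omega = 0$ is the standard fact that a finite-dimensional semisimple algebra has no nonzero nilpotent ideals.
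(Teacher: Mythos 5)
Your proof is correct, but it reaches the contradiction by a different mechanism than the paper. The paper also starts from the observation that any odd $x$ satisfies $x^2=0$, but then argues one element at a time: the principal ideal generated by a nonzero odd $x$ is nonzero and proper, yet it cannot be a direct summand of $A$ as an $A$-module, because a projection onto it would be an idempotent, idempotents are necessarily even, and the only even elements are the scalars $\bC\cdot 1$, whose idempotents are $0$ and $1$. You instead analyze the whole odd part at once: the associativity identity $\omega(x,y)\,z=\omega(y,z)\,x$ forces the pairing $A_1\times A_1\to A_0\cong\bC$ to vanish, so $A_1$ is a square-zero two-sided ideal, which semisimplicity rules out. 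Your route costs one extra (correct) computation that the paper never needs --- it does not matter there whether $A_1\cdot A_1$ vanishes --- but it buys a complete description of the multiplication (any such algebra with $A_1\neq 0$ would be the square-zero extension $\bC\oplus A_1$, e.g.\ the exterior algebra you mention), and it lands on the textbook fact that a semisimple algebra has no nonzero nilpotent ideals rather than on the slightly more bespoke ``all idempotents are even'' argument. Both are complete; your identification of associativity as the one substantive input is accurate for your route, whereas the paper's substantive input is the parity constraint on idempotents.
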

\begin{proof}
  If $x \in A$ is a nonzero odd element, then it is nilpotent (since $x^2 = xx = -xx$ and so $x^2 = 0$). Thus the principle ideal generated by $x$ is proper.
  On the other hand, it is not a direct summand of $A$ as an $A$-module because projection operators are bosonic, and so the only projection operators in $A$ are $0$ and $1$.
  This contradicts the semisimplicty of $A$.
\end{proof}

This implies the fermionic versions of Proposition~\ref{SimpleX} and Corollary~\ref{cor-pi0ismonoid}. 

To complete the proof of Theorem~\ref{theorem-super}, it suffices to observe that if $X \in \cC$ is indecomposable, then, since the unit map $\eta_X : 1_\cC \to X \otimes X^*$ is nonzero, $X \otimes X^*$ is an indecomposable object in the identity component of $\cC$, and so invertible, and thus $X$ is invertible. Indeed, since $\Omega\cC \cong \cat{SVec}$, by Proposition~\ref{condense} there are precisely two simple objects in the identity component of $\cC$, corresponding to the two simple superalgebras $\bC$ and $\mathrm{Cliff}(1)$, and $\mathrm{Cliff}(1)$ is famously Morita-invertible.

\begin{rem}
  In fact, $X \otimes X^*$ is always trivial, and never the nontrivial simple objected $\mathrm{Cliff}(1)$. Indeed, it is a general fact of monoidal higher categories that if an object is invertible, then its inverse is its dual. One can also see this directly by running the proof of Proposition~\ref{condense} for the nonzero 1-morphism $f = \eta_X$. Then $g = \eta_X^*$, and the simple algebra in question is the composition $p = gf = \eta_X^* \circ \eta_X = \int_{S^1_b}X = \bC$.
\end{rem}


\bibliography{F2C}{}
\bibliographystyle{alpha}

\end{document}